\theoremstyle{theorem}
\newtheorem{theorem}{Theorem}
\theoremstyle{definition}
\theoremstyle{remark}
\title{Binary quadratic forms and sums of powers of integers}
\author[J.L. Cereceda]{Jos\'e Luis Cereceda}
\address{%
        Collado Villalba, 28400 -- Madrid, Spain}
\email{jl.cereceda@movistar.es}
\begin{document}

\begin{abstract}
In this methodological paper, we first review the classic cubic Diophantine equation $a^3 + b^3 + c^3 = d^3$, and consider
the specific class of solutions $q_1^3 + q_2^3 + q_3^3 = q_4^3$ with each $q_i$ being a binary quadratic form. Next we turn
our attention to the familiar sums of powers of the first $n$ positive integers, $S_k = 1^k + 2^k + \cdots + n^k$, and express the squares $S_k^2$, $S_m^2$, and the product $S_k S_m$ as a linear combination of power sums. These expressions, along with the
above quadratic-form solution for the cubic equation, allows one to generate an infinite number of relations of the form $Q_1^3 + Q_2^3 + Q_3^3 = Q_4^3$, with each $Q_i$ being a linear combination of power sums. Also, we briefly consider the quadratic
Diophantine equations $a^2 + b^2 + c^2 = d^2$  and $a^2 + b^2 = c^2$, and give a family of corresponding solutions $Q_1^2
+ Q_2^2 + Q_3^2 = Q_4^2$ and $Q_1^2 + Q_2^2 = Q_3^2$ in terms of sums of powers of integers.
\end{abstract}

\maketitle

\section{Introduction}\label{sec:1}

Our starting point is the cubic Diophantine equation
\begin{equation}\label{cubic}
a^3 + b^3 + c^3 = d^3,    \qquad  abcd \neq 0.
\end{equation}
(Note that $abcd \neq 0$ as, by Fermat's Last Theorem, we cannot have $a^3 + b^3 = c^3$.) As pointed out by Dickson in his
comprehensive {\it History of the Theory of Numbers}, the problem of finding the rational or integer (positive or negative) solutions to Equation \eqref{cubic} can be traced back to Diophantus \cite[p. 550]{dickson}. A first parametric solution was given by Vieta in 1591 \cite[p. 551]{dickson} and, in 1754, Euler found the most general family of rational solutions to \eqref{cubic} (see \cite[p. 552]{dickson} and \cite{cook}). Much more recently, Choudhry \cite{choudhry} obtained a {\it complete\/} solution of \eqref{cubic} in positive integers.

It is to be noted that there are several different formulations equivalent to the general solution discovered by Euler. In his third notebook, Ramanujan provided a family of solutions equivalent to Euler's general solution that appears to be the simplest of
all \cite{berndt,chamber}. In addition to this general solution, Ramanujan also gave some further families of parametric solutions
to \eqref{cubic} as well as several numerical examples. Specifically, in a problem submitted to the {\it Journal of the Indian
Mathematical Society}, (Question 441, JIMS 5, \mbox{p. 39}, 1913), Ramanujan put forward the following two-parameter
solution to Equation \eqref{cubic} \cite{berndt2}:
\begin{equation}\label{Rsol}
(3u^2 +5uv -5v^2)^3 + (4u^2 -4uv +6v^2)^3 + (5u^2 -5uv -3v^2)^3 = (6u^2 -4uv + 4v^2)^3.
\end{equation}
Relation \eqref{Rsol} constitutes an algebraic identity and, as such, is satisfied by any real or complex values of the parameters
$u$ and $v$. As we are dealing with Diophantine equations, however, it will be assumed that $u$ and $v$ take only rational or
integer values. In particular, putting $u=1$ and $v=0$ in \eqref{Rsol} gives us the smallest positive solution to \eqref{cubic},
namely $3^3 + 4^3 + 5^3 = 6^3$.

In this methodological paper, we search for solutions of the kind shown in Equation \eqref{Rsol}, that is, solutions $q_1^3 + q_2^3 + q_3^3 = q_4^3$ for which each of the $q_i$'s ($i=1,2,3,4$) adopts the form of a quadratic polynomial of two variables, say $u$ and $v$ (a binary quadratic form): $q_i = \alpha_i u^2 + \beta_i uv + \gamma_i v^2$, where $\alpha_i$, $\beta_i$, and $\gamma_i$ take integer (positive or negative) values. Our interest in this type of solutions stems from the fact that, as explained in \cite{harper}, by using the above identity $3^3 + 4^3 + 5^3 = 6^3$ as a seed, one can generate quadratic-form formulas to Equation \eqref{cubic}. Expanding on this point, and borrowing a theorem of S\'andor \cite[Theorem 1]{sandor}, in Section \ref{sec:2} we show that, indeed, it is possible to construct quadratic-form representations for the cubic equation \eqref{cubic} starting from {\it any\/} particular nontrivial solution to \eqref{cubic} (see below for the definition of a trivial solution). The proof of this result given by S\'andor (which is essentially reproduced in Section 2) is particularly suitable for our purpose since it utilizes only precalculus tools. Furthermore, S\'andor's theorem allows one to readily produce a wealth of algebraic identities like that in Equation \eqref{Rsol} by simply adding and multiplying the integers $a$, $b$, $c$, and $d$ constituting a particular (nontrivial) solution of \eqref{cubic}.

In Section \ref{sec:3}, we consider the familiar sums of powers of the first $n$ positive integers, $S_k = 1^k + 2^k + \cdots + n^k$ (with $k$ being a nonnegative integer), and express the squares $S_k^2$, $S_m^2$, and the product $S_k S_m$ as a linear combination of power sums. In this way, using such expressions for $S_k^2$, $S_m^2$, and $S_k S_m$, the following generic quadratic form
\begin{equation}\label{Qi}
Q_i(k,m,n) = \alpha_i S_k^2 + \beta_i S_k S_m + \gamma_i S_m^2,
\end{equation}
can be equally expressed as a linear combination of power sums. (Note that $Q_i(k,m,n)$ depends explicitly on $n$ through the
power sums $S_k$ and $S_m$.) Therefore, using the quadratic-form solutions obtained in Section 2, one can construct relationships
of the type $Q_{1}(k,m,n)^3 + Q_{2}(k,m,n)^3 + Q_{3}(k,m,n)^3 = Q_{4}(k,m,n)^3$, with each $Q_i(k,m,n)$ being a linear
combination of power sums. Moreover, substituting each of the power sums in $Q_i(k,m,n)$ for its polynomial representation yields
(for fixed $k$ and $m$) algebraic identities of the form $Q_1(u)^3 + Q_2(u)^3 + Q_3(u)^3 = Q_4(u)^3$, where each $Q_i(u)$ is
itself a polynomial in the real or complex variable $u$ (see, for instance, Equation \eqref{relation2} below).

Finally, in Section \ref{sec:4} we briefly consider the quadratic Diophantine equation $a^2 +b^2 + c^2 = d^2$. Using a particularly simple quadratic-form solution for this equation, we give a corresponding solution in terms of $S_k$ and $S_k^2$ (see Equation \eqref{quadratic3} below). On the other hand, starting from an almost trivial identity, we give a family of Pythagorean triangles whose side lengths are given by $|S_k^2 - S_m^2|$, $2S_k S_m$, and $S_k^2 +S_m^2$. As a by-product, we also obtain a family of solutions for the Diophantine equation $a^2 + b^2 = c^2 +d^2$.

From a pedagogical point of view, this methodological paper could be of interest to both high school and college students for the
following reasons. On the one hand, it shows in an elementary way how to obtain systematically quadratic-form solutions for the cubic equation \eqref{cubic}. In this regard, as we shall see, S\'andor's theorem proves to be extremely useful to this end since it provides a fairly elementary yet powerful method to generate quadratic-form formulas $q_1^3 + q_2^3 + q_3^3 = q_4^3$ for Equation \eqref{cubic}. On the other hand, we introduce some well-known formulas (though rarely found in the current literature) involving sums of powers of integers, in particular that expressing the product $S_k S_m$ as a linear combination of $S_j$'s. Using these formulas, and with the aid of a computer algebra system, students ought reliably compute the quadratic form in Equation \eqref{Qi} for a variety of values of the parameters. Last, but not least, equipped with the given formulas for $S_k S_m$, $S_k^2$, $S_1^k$, and $S_2 S_1^k$, students might want to explore other low degree Diophantine equations (see, in this respect, \cite[Chapter 2]{barbeau}) and recast some of their solutions in terms of sums of powers of integers.

\section{Quadratic solutions for the cubic equation}\label{sec:2}

As was anticipated in the introduction, we shall make use of a theorem of S\'andor (see \cite[Theorem 1]{sandor}) in order to construct two-parameter quadratic solutions for the cubic equation \eqref{cubic}. Following S\'andor, we say that a solution of \eqref{cubic} is trivial if $d=a$ or $d=b$ or $d =c$. The said theorem, adapted to our notation, is as follows.
\begin{theorem}\label{th:2.1}
If $(a,b,c,d)$ is a nontrivial integer solution of \eqref{cubic} then for any integer values of $u$ and $v$
\begin{equation}\label{sandor}
\begin{split}
q_1 & = a(a+c) u^2 + (d-b)(d+b) uv - c(d-b) v^2 ,  \\
q_2 & = b(a+c) u^2 - (c-a)(c+a) uv + d(d-b) v^2 ,  \\
q_3 & = c(a+c) u^2 - (d-b)(d+b) uv - a(d-b) v^2 ,   \\
q_4 & = d(a+c) u^2 - (c-a)(c+a) uv + b(d-b) v^2 ,
\end{split}
\end{equation}
satisfy $q_1^3 + q_2^3 + q_3^3 = q_4^3$.
\end{theorem}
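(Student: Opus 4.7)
The plan is to verify the identity directly. Since each $q_i$ is homogeneous of degree two in $u,v$, the expression $q_1^3+q_2^3+q_3^3-q_4^3$ is homogeneous of degree six, so the claim reduces to showing that each of the seven coefficients of $u^{6-k}v^k$ (for $k=0,1,\ldots,6$) vanishes as a polynomial in $a,b,c,d$ modulo $a^3+b^3+c^3-d^3$. Writing $\alpha_i,\beta_i,\gamma_i$ for the coefficients of $u^2, uv, v^2$ in $q_i$, these polynomial coefficients are obtained by expanding $q_i^3=(\alpha_i u^2+\beta_i uv+\gamma_i v^2)^3$ via the trinomial theorem and collecting like monomials.

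Two structural features of S\'andor's $q_i$'s keep the algebra tractable. First, each $\alpha_i$ carries a factor of $a+c$ and each $\gamma_i$ a factor of $d-b$, so the partial sums $\alpha_1+\alpha_3=(a+c)^2$ and $\alpha_4-\alpha_2=(d-b)(a+c)$ (with analogues for the $\gamma_i$) are available throughout. Second, the middle coefficients satisfy $\beta_1=-\beta_3=d^2-b^2$ and $\beta_2=\beta_4=a^2-c^2$. I would then split the seven coefficients into three groups. The extremes ($k=0,6$) factor immediately as $(a+c)^3(a^3+b^3+c^3-d^3)$ and $(d-b)^3(d^3-a^3-b^3-c^3)$, which vanish by hypothesis. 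The odd values ($k=1,3,5$) produce a total odd power of the $\beta_i$'s in each monomial; after substituting $\beta_1=-\beta_3$ and $\beta_2=\beta_4$, the contributions reorganise into terms proportional to $(a^2-c^2)(d^2-b^2)$ with opposite signs and cancel, without any appeal to \eqref{cubic}. For the remaining even values ($k=2,4$), the same grouping leaves a common factor of the form $(a+c)^i(d-b)^j$ multiplying an expression which, after invoking the factorisations $(a+c)(a^2-ac+c^2)=a^3+c^3$ and $(d-b)(d^2+bd+b^2)=d^3-b^3$, reduces to a multiple of $a^3+b^3+c^3-d^3$ and hence vanishes by hypothesis.

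The main obstacle will be the two middle coefficients ($k=2,4$), where the cancellation is not manifest: one must carefully regroup several monomials so that the two sum-of-cubes factorisations surface at the right moment. A more conceptual shortcut I would keep in reserve is the observation that $q_1+q_3=(a+c)F$ and $q_4-q_2=(d-b)F$ with $F=(a+c)u^2-(d-b)v^2$. Together with the standard factorisations $q_1^3+q_3^3=(q_1+q_3)(q_1^2-q_1q_3+q_3^2)$ and $q_4^3-q_2^3=(q_4-q_2)(q_4^2+q_2q_4+q_2^2)$, the claim reduces to verifying $(a+c)(q_1^2-q_1q_3+q_3^2)=(d-b)(q_4^2+q_2q_4+q_2^2)$, which is precisely the point at which the relation $(a+c)(a^2-ac+c^2)=(d-b)(d^2+bd+b^2)$ equivalent to \eqref{cubic} enters.
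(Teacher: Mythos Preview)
Your direct-verification plan is correct: the seven coefficients vanish exactly as you outline, and in particular the $k=2$ (and, by the $\alpha\leftrightarrow\gamma$ symmetry, $k=4$) case collapses via the regrouping $(d-b)(d+b)^2-bd(d-b)=(d-b)(d^2+bd+b^2)$ and $(a-c)^2(a+c)+ac(a+c)=(a+c)(a^2-ac+c^2)$ to a multiple of $d^3-b^3-a^3-c^3$. The odd-$k$ cancellations are indeed purely formal, independent of \eqref{cubic}.

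The paper, however, takes a genuinely different route. Rather than verifying the identity, it \emph{derives} the formulas \eqref{sandor} via a device of Nicholson: one starts from the ansatz
\[
(ux-cy)^3+(-ux-ay)^3+(vx-by)^3=(vx-dy)^3
\]
with undetermined $x,y$, expands, and observes that the $x^3$-terms cancel trivially while the $y^3$-terms cancel precisely because $a^3+b^3+c^3=d^3$. What remains after dividing by $3xy$ is a single linear relation between $x$ and $y$, from which one reads off $x=(a^2-c^2)u+(d^2-b^2)v$ and $y=(d-b)v^2-(a+c)u^2$; substituting these back into the four brackets produces the $q_i$. Thus the paper's argument explains \emph{where} the quadratic forms come from and avoids almost all computation, at the price of the clever ansatz. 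Your argument needs no such insight and furnishes an independent check, but costs more algebra. Interestingly, your reserve observation $q_1+q_3=(a+c)F$, $q_4-q_2=(d-b)F$ with $F=(a+c)u^2-(d-b)v^2$ is essentially the shadow of Nicholson's trick (it is what makes the ansatz collapse), so it would serve as a natural bridge between the two approaches.
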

\begin{proof}
As noted by S\' andor, relations \eqref{sandor} can be obtained by generalizing Ramanujan's quadratic solution \eqref{Rsol} but,
following \cite{sandor}, we will give a simpler proof of Theorem \ref{th:2.1} employing a technique devised by Nicholson \cite{nicholson}. Thus, let $a^3 + b^3 + c^3 = d^3$ be a nontrivial solution of \eqref{cubic}, and consider Nicholson's parametric equation \cite{sandor}
\begin{equation}\label{nichol}
\big( ux -cy \big)^3 + \big( -ux -ay \big)^3 + \big( vx -by \big)^3 = \big( vx -dy \big)^3 .
\end{equation}
Expanding in \eqref{nichol} and using the constraint $a^3 + b^3 + c^3 = d^3$, the cubic terms vanish and we are left with an equation involving only quadratic and linear exponents, namely
\begin{multline*}
-3u^2x^2cy + 3uxc^2y^2 -   3u^2x^2ay - 3uxa^2y^2 - 3v^2x^2by   \\
 + 3vxb^2y^2 = -3v^2x^2dy + 3vxd^2y^2.
\end{multline*}
Dividing throughout by the common factor $3xy$ gives
\begin{equation*}
x \big( dv^2 - bv^2 - cu^2 - au^2 \big) = y \big( d^2 v - b^2 v + a^2 u - c^2 u \big).
\end{equation*}
Clearly, the values $x = d^2 v - b^2 v + a^2 u - c^2 u$ and $y = dv^2 - bv^2 - cu^2 - au^2$ satisfy the equation, and then
we obtain
\begin{equation*}
\begin{split}
ux - cy  & = u \big( d^2 v - b^2 v + a^2 u - c^2 u \big) - c \big( dv^2 - bv^2 - cu^2 - au^2 \big) \\
& = a(a+c) u^2 + (d-b)(d+b) uv - c(d-b) v^2,  \\
-ux -ay & = -u \big( d^2 v - b^2 v + a^2 u - c^2 u \big) - a \big( dv^2 - bv^2 - cu^2 - au^2 \big) \\
& = b(a+c) u^2 - (c-a)(c+a) uv + d(d-b) v^2,  \\
vx -by & = v \big( d^2 v - b^2 v + a^2 u - c^2 u \big) - b \big( dv^2 - bv^2 - cu^2 - au^2 \big) \\
& = c(a+c) u^2 - (d-b)(d+b) uv - a(d-b) v^2,  \\
vx -dy & = v \big( d^2 v - b^2 v + a^2 u - c^2 u \big) - d \big( dv^2 - bv^2 - cu^2 - au^2 \big) \\
& =  d(a+c) u^2 - (c-a)(c+a) uv + b(d-b) v^2.
\end{split}
\end{equation*}
Nicholson's parametric equation \eqref{nichol} then guarantees that $q_1^3 + q_2^3 + q_3^3 = q_4^3$, with the $q_i$'s being
given by Equations \eqref{sandor}.
\end{proof}

Let us consider a few examples illustrating the application of Theorem \ref{th:2.1}. In each case, starting from a given (nontrivial) integer solution $(a,b,c,d)$ to \eqref{cubic}, it generates a two-parameter family of solutions $(q_1,q_2,q_3,q_4)$ satisfying \eqref{cubic}:
\begin{enumerate}

\item Substituting $(a,b,c,d) = (3,4,5,6)$ into Equations \eqref{sandor}, dividing by $2$, and using the linear transformation
$u \to u/2$, we get Ramanujan's solution \eqref{Rsol} \cite{sandor}.

\item Using $(a,b,c,d) = (1,6,8,9)$ into Equations \eqref{sandor} and dividing by $3$ yields
\begin{align}\label{ex2}
(3u^2 +15uv -8v^2)^3 & + (18u^2 -21uv +9v^2)^3  \notag  \\
& + (24u^2 -15uv -v^2)^3  = (27u^2 -21uv + 6v^2)^3.
\end{align}
Now, putting $u = 1$ and $v = 2$ in \eqref{ex2} gives $1^3 + 12^3 + (-10)^3 = 9^3$ or, equivalently,
\begin{equation}\label{ex21}
1^3 + 12^3 = 9^3 + 10^3 = 1729.
\end{equation}
Readers will recognize $1729$ as the famous Hardy-Ramanujan number, having the distinctive property that it is the smallest positive integer that can be written as the sum of two positive cubes in more than one way \cite{muk,schumer,silverman}. On the other hand, for $u=6$ and $v=-1$, we obtain
\begin{equation}\label{ex22}
10^3 + 783^3 + 953^3 = 1104^3.
\end{equation}

\item Likewise, using $(a,b,c,d) = (7,14,17,20)$ into Equations \eqref{sandor} and dividing by $6$ yields
\begin{align}\label{ex3}
(28u^2 +34uv -17v^2)^3 & + (56u^2 -40uv +20v^2)^3 \notag \\
& + (68u^2 -34uv -7v^2)^3 = (80u^2 -40uv + 14v^2)^3.
\end{align}
Putting $u = -2$ and $v = -3$ in \eqref{ex3} we find
\begin{equation}\label{ex31}
163^3 + 164^3 + 5^3 = 206^3.
\end{equation}
And, for $u=10$ and $v=3$, we obtain
\begin{equation}\label{ex32}
3667^3 + 4580^3 + 5717^3 = 6926^3.
\end{equation}
\end{enumerate}

Rather interestingly, it can be shown (see \cite[Theorem 2]{sandor}) that if $(a,b,c,d)$ is a nontrivial integer solution to \eqref{cubic} and $(q_1,q_2,q_3,q_4)$ is a nontrivial integer solution obtained via Equations \eqref{sandor}, then necessarily
\begin{equation}\label{fraction}
\frac{a+c}{d-b} = \frac{q_1 + q_3}{q_4 - q_2}.
\end{equation}
Note that the denominators in Equation \eqref{fraction} are well defined since both $(a,b,c,d)$ and $(q_1,q_2,q_3,q_4)$ are
nontrivial solutions. Note further that, for given $a$, $b$, $c$, and $d$, relation \eqref{fraction} holds irrespective of the (integer) values taken by $u$ and $v$ in Equations \eqref{sandor}. Conversely (see \cite[Theorem 3]{sandor}), if $(a,b,c,d)$ and $(q_1,q_2,q_3,q_4)$ are two integer nontrivial solutions to \eqref{cubic} and $\frac{a+c}{d-b} = \frac{q_1 + q_3}{q_4 - q_2}$, then there exist integers $u$ and $v$ such that substituting these into Equations \eqref{sandor} yields $(q_1,q_2,q_3,q_4)$ or a multiple of it. Thus, the solutions $(q_1,q_2,q_3,q_4)$ obtained from a given solution $(a,b,c,d)$ via Equations \eqref{sandor} can be essentially characterized through the condition stated in Equation \eqref{fraction}.

We can readily check that the said condition is indeed satisfied by the examples given above. So, for the example given in \eqref{ex2}, the condition \eqref{fraction} reads as
\begin{equation*}
\frac{1+8}{9-6} = \frac{3u^2 +15uv -8v^2 + 24u^2 -15uv -v^2}{27u^2 -21uv + 6v^2 - 18u^2 + 21uv -9v^2} = 3,
\end{equation*}
which is fulfilled for all integer values of $u$ and $v$ (discarding the trivial solution $u=v=0$). In particular, it holds for the numerical examples \eqref{ex21} and \eqref{ex22}, as $\frac{1-10}{9-12} = \frac{10+953}{1104-783} =3$. Similarly, regarding the example given in \eqref{ex3}, the condition \eqref{fraction} reads as
\begin{equation*}
\frac{7+17}{20-14} = \frac{28u^2 +34uv -17v^2 + 68u^2 -34uv -7v^2}{80u^2 -40uv + 14v^2 - 56u^2 +40uv -20v^2} = 4,
\end{equation*}
which is equally fulfilled for any choice of integers $u$ and $v$ (excluding the case $u=v=0$). In particular, it holds for the
numerical examples \eqref{ex31} and \eqref{ex32}, as $\frac{163+5}{206-164} = \frac{3667+5717}{6926-4580} =4$.

We conclude this section by noting that, naturally, given an initial solution $(a,b,c,d)$ to \eqref{cubic}, we can use as well either of its permutations $(a,c,b,d)$, $(b,a,c,d)$, $(b,c,a,d)$, $(c,a,b,d)$, or $(c,b,a,d)$ as inputs to Equations \eqref{sandor}. For instance, instead of the initial solution $(1,6,8,9)$ used above, we can plug $(1,8,6,9)$ into Equations \eqref{sandor} to obtain
\begin{align}\label{next}
(7u^2 +17uv -6v^2)^3 & + (56u^2 -35uv +9v^2)^3 \notag \\
& + (42u^2 -17uv -v^2)^3  = (63u^2 -35uv + 8v^2)^3.
\end{align}
Incidentally, we observe that setting $u=-1$ and $v=-3$ in \eqref{next} yields (after dividing by 2): $2^3 + 16^3 = 9^3 + 15^3 = 4104$, which is the next Hardy-Ramanujan number after $1729$. We encourage the students to search for their own solutions to the cubic equation \eqref{cubic} by means of Theorem \ref{th:2.1}, and to verify that they comply with relation \eqref{fraction}.

\section{Quadratic forms of sums of powers of integers}\label{sec:3}

Consider now the power sums $S_k = 1^k + 2^k + \cdots + n^k$ and $S_m = 1^m + 2^m + \cdots + n^m$. Their product is
given by
\begin{equation}\label{product}
S_k S_m = \frac{1}{k+1} \sum_{j=0}^{k/2} B_{2j} \binom{k+1}{2j} S_{k+m+1-2j} +
\frac{1}{m+1} \sum_{j=0}^{m/2} B_{2j} \binom{m+1}{2j} S_{k+m+1-2j},
\end{equation}
where $B_0 =1$, $B_1 = -1/2$, $B_2 = 1/6$, $B_3 =0$, $B_4 = -1/30$, etc., are the Bernoulli numbers (which fulfill the property
that $B_{2j+1} =0$ for all $j \geq 1$) \cite{apostol,deeba}; $\binom{k}{m}$ are the familiar binomial coefficients; and where the
upper summation limit $k/2$ denotes the greatest integer lesser than or equal to $k/2$. Formula \eqref{product} is not commonly
encountered across the abound literature on sums of powers of integers. A notable exception being the paper \cite{macdougall}, where formula \eqref{product} is stated as a theorem. As noted in \cite{macdougall}, formula \eqref{product} was known to Lucas by 1891. For the case that $k =m$, formula \eqref{product} reduces to
\begin{equation}\label{square}
S_k^2 = \frac{2}{k+1} \sum_{j=0}^{k/2} B_{2j} \binom{k+1}{2j} S_{2k+1-2j}.
\end{equation}
For later reference, we also quote the formula for the $k$-th power of $S_1$ expressed as a linear combination of power sums
\begin{equation}\label{s1k}
S_1^k = \frac{1}{2^{k-1}} \sum_{j=0}^{\frac{k-1}{2}} \binom{k}{2j+1} S_{2k-1-2j},
\end{equation}
as well as the formula for the product
\begin{equation}\label{s2s1k}
S_2 S_1^k = \frac{1}{3 \cdot 2^k} \sum_{j=0}^{\frac{k+1}{2}} \frac{2k+3-2j}{2j+1} \binom{k+1}{2j} S_{2k+2-2j}.
\end{equation}
Note that the right-hand side of Equations \eqref{square} and \eqref{s1k} involves only power sums $S_j$ with $j$ odd, whereas that of Equation \eqref{s2s1k} involves only power sums $S_j$ with $j$ even. Formula \eqref{s1k} (written in a slightly different form) appears as a theorem in \cite{macdougall}, where it is further noted that it was known as far back as 1877 (Lampe) and 1878 (Stern). Regarding formula \eqref{s2s1k}, it looks somewhat more exotic, although it is by no means new. An equivalent formulation of both Equations \eqref{s1k} and \eqref{s2s1k} can be found in, respectively, formulas (17) and (22) of the review paper by Kotiah \cite{kotiah}. It is worth pointing out, on the other hand, that the right-hand side of Equation \eqref{s1k} [\eqref{s2s1k}] can be interpreted as a sort of average of sums of powers of integers as the total number $\sum_{j=0}^{\frac{k-1}{2}} \binom{k}{2j+1}$ [$\sum_{j=0}^{\frac{k+1}{2}}\frac{2k+3-2j}{2j+1} \binom{k+1}{2j}$] of power sums appearing on the right-hand side of \eqref{s1k} [\eqref{s2s1k}] is just $2^{k-1}$ [$3\cdot 2^k$] (see \cite{cer,piza}).

Provided with Equations \eqref{product} and \eqref{square}, we can thus write the quadratic form \eqref{Qi} as the following linear combination of power sums:
\begin{align}\label{Qi2}
& Q_i(k,m,n)  =  \frac{2\alpha_i}{k+1} \sum_{j=0}^{k/2} B_{2j} \binom{k+1}{2j} S_{2k+1-2j} +
\frac{\beta_i}{k+1} \sum_{j=0}^{k/2} B_{2j} \binom{k+1}{2j} S_{k+m+1-2j} \notag  \\
& \qquad\quad + \frac{\beta_i}{m+1} \sum_{j=0}^{m/2} B_{2j} \binom{m+1}{2j} S_{k+m+1-2j} +
\frac{2\gamma_i}{m+1} \sum_{j=0}^{m/2} B_{2j} \binom{m+1}{2j} S_{2m+1-2j}.
\end{align}
Regarding the coefficients $\alpha_i$, $\beta_i$, and $\gamma_i$, we must choose them so that the quadratic forms $q_i = \alpha_i u^2 + \beta_i uv + \gamma_i v^2$ satisfy the relation $q_1^3 + q_2^3 + q_3^3 = q_4^3$ for any integer values of $u$ and $v$. This in turn ensures that the quadratic forms $Q_i(k,m,n)$ in Equation \eqref{Qi2} will satisfy $Q_1(k,m,n)^3 + Q_2(k,m,n)^3 + Q_3(k,m,n)^3 = Q_4(k,m,n)^3$ as well.

At this point, it is obviously most useful to run a computer algebra system such as {\it Mathematica} to quickly compute the quadratic form $Q_i(k,m,n)$ for concrete values of $\alpha_i$, $\beta_i$, $\gamma_i$, $k$, $m$, and $n$. As a simple but illustrative example, let us first take $k=1$ and $m=2$ to obtain
\begin{equation*}
Q_i(1,2,n) = \frac{1}{6} \big( \beta_i S_2 + \big(6\alpha_i + 2\gamma_i \big) S_3 + 5 \beta_i S_4 + 4 \gamma_i S_5 \big).
\end{equation*}
Then, choosing for example the coefficients $\alpha_i$, $\beta_i$, and $\gamma_i$ appearing in Equation \eqref{ex2} (namely, $\alpha_1 = 3$, $\beta_1 = 15$, $\gamma_1 = -8$, $\alpha_2 = 18$, $\beta_2 = -21$, $\gamma_2 = 9$, $\alpha_3 =24$, $\beta_3 = -15$, $\gamma_3 = -1$, $\alpha_4 =27$, $\beta_4 = -21$, and $\gamma_4 = 6$), we get (after removing the common factor $1/6$) the following relationship among the power sums $S_2$, $S_3$, $S_4$, and $S_5$:
\begin{multline}\label{relation1}
\big( 15S_2 +2S_3 +75S_4 -32S_5 \big)^3 + \big( -21S_2 +126 S_3 -105S_4 + 36S_5 \big)^3  \\
+ \big( -15S_2 +142S_3 -75S_4 -4S_5 \big)^3 = \big( -21S_2 +174S_3 -105S_4 +24S_5 \big)^3 .
\end{multline}
Equation \eqref{relation1} can in turn be written explicitly as a function of the variable $n$ by expressing each of the involved power sums in terms of $n$. It is a well-known result that $S_k$ can be expressed as a polynomial in $n$ of degree $k+1$ with zero constant term according to the formula (see, for instance, \cite{kotiah,tanton,williams,wu}):
\begin{equation}\label{bernoulli}
S_k = \frac{1}{k+1} \sum_{j=1}^{k+1} \binom{k+1}{j} (-1)^{k+1-j} B_{k+1-j} n^j,  \qquad k \geq 0.
\end{equation}
This formula, which was first established by Jacob Bernoulli in his masterpiece {\it Ars Conjectandi\/} (published posthumously in 1713 \cite{alexander}), provides an efficient way to compute the power sums $S_k$. Table \ref{tb:1} shows the polynomials for $S_2, S_3,\ldots,S_7$ as obtained from Bernoulli's formula \eqref{bernoulli}.\footnote{
Equation \eqref{bernoulli} is often referred to in the literature as Faulhaber's formula after the German engineer and mathematician Johann Faulhaber (1580-1635). In our view, however, it is more accurate to name Equation \eqref{bernoulli} as Bernoulli's formula or Bernoulli's identity.} Thus, substituting the power sums $S_2$, $S_3$, $S_4$, and $S_5$ in Equation \eqref{relation1} by the corresponding polynomial in Table \ref{tb:1}, and renaming the variable $n$ as a generic variable $u$, we get (after multiplying by an overall factor of $3$) the
algebraic identity
\begin{align}\label{relation2}
& \big(32u^2 +93u^3 +74u^4 -3u^5 -16u^6 \big)^3 + \big(54u^2 +63u^3 -18u^4 -9u^5 +18u^6 \big)^3  \notag \\
& + \big(85u^2 +123u^3 -11u^4 -51u^5 -2u^6 \big)^3 = \big(93u^2 +135u^3 +3u^4 -27u^5 +12u^6 \big)^3 ,
\end{align}
which is true for all real or complex values of $u$. For example, for $u=1$, relation \eqref{relation2} gives us (after dividing by 36): $5^3 + 3^3 + 4^3 = 6^3$.

\begin{table}[ttt]
\centering
\begin{tabular}{l}
% \hline
$S_2 = \frac{1}{3} n^3 + \frac{1}{2} n^2 + \frac{1}{6} n $ \\
$S_3 = \frac{1}{4} n^4 + \frac{1}{2} n^3 + \frac{1}{4} n^2 $  \\
$S_4 = \frac{1}{5} n^5 + \frac{1}{2} n^4 + \frac{1}{3} n^3 - \frac{1}{30}n $  \\
$S_5 = \frac{1}{6} n^6 + \frac{1}{2} n^5 + \frac{5}{12} n^4 - \frac{1}{12}n^2 $ \\
$S_6 = \frac{1}{7} n^7 + \frac{1}{2} n^6 + \frac{1}{2} n^5 - \frac{1}{6}n^3 + \frac{1}{42}n$ \\
$S_7 = \frac{1}{8} n^8 + \frac{1}{2} n^7 + \frac{7}{12} n^6 - \frac{7}{24}n^4 + \frac{1}{12}n^2 $ \\
 % \hline
\vspace{-2mm}
\end{tabular}
\caption{\small{The power sums $S_2, S_3,\ldots,S_7$ expressed as polynomials in $n$}.}\label{tb:1}
\end{table}

On the other hand, taking $u = S_2$ and $v = S_1^k$ in the quadratic form $q_i = \alpha_i u^2 + \beta_i uv + \gamma_i v^2$ yields
\begin{equation*}
F_i(k,n) = \alpha_i S_2^2 + \beta_i S_2 S_1^k + \gamma_i S_1^{2k}.
\end{equation*}
Utilizing Equations \eqref{s1k} and \eqref{s2s1k}, and noting that $S_2^2 = \frac{1}{3}S_3 + \frac{2}{3}S_5$, we can then write $F_i(k,n)$
as the linear combination of power sums
\begin{multline}\label{Fi}
F_i(k,n) = \frac{\alpha_i}{3} S_3 + \frac{2\alpha_i}{3} S_5 + \frac{\beta_i}{3 \cdot 2^k} \sum_{j=0}^{\frac{k+1}{2}} \frac{2k+3-2j}{2j+1}
\binom{k+1}{2j} S_{2k+2-2j} \\
+ \frac{\gamma_i}{2^{2k-1}} \sum_{j=0}^{\frac{2k-1}{2}} \binom{2k}{2j+1} S_{4k-1-2j}.
\end{multline}
As before, in order to derive relations of the type $F_1(k,n)^3 + F_2(k,n)^3 + F_3(k,n)^3 = F_4(k,n)^3$, we must choose the coefficients $\alpha_i$, $\beta_i$, and $\gamma_i$ such that the quadratic forms $q_i = \alpha_i u^2 + \beta_i uv + \gamma_i v^2$ satisfy $q_1^3 + q_2^3 + q_3^3 = q_4^3$ for any integer values of $u$ and $v$. As a concrete example, let us first take $k=2$ in Equation \eqref{Fi} to get
\begin{equation*}
F_i(2,n) = \frac{1}{12}\big( 4\alpha_i S_3 + 5\beta_i S_4 + (8\alpha_i + 6\gamma_i ) S_5 + 7 \beta_i S_6 + 6 \gamma_i S_7 \big).
\end{equation*}
Then, using the coefficients $\alpha_i$, $\beta_i$, and $\gamma_i$ appearing in Equation \eqref{next} (namely, $\alpha_1 = 7$, $\beta_1 = 17$, $\gamma_1 = -6$, $\alpha_2 = 56$, $\beta_2 = -35$, $\gamma_2 = 9$, $\alpha_3 =42$, $\beta_3 = -17$, $\gamma_3 = -1$, $\alpha_4 =63$, $\beta_4 =-35$, and $\gamma_4 = 8$), we obtain (omitting the common factor $1/12$) the following relationship among the power sums $S_3$, $S_4$, $S_5$, $S_6$, and $S_7$:
\begin{equation}\label{relation3}
\begin{split}
& \big( 28S_3 + 85S_4 + 20S_5 + 119S_6 - 36S_7 \big)^3  \\
& \qquad + \big( 224S_3 - 175S_4 + 502S_5 - 245S_6 + 54S_7 \big)^3  \\
& \qquad +  \big( 168S_3 - 85S_4 + 330S_5 - 119S_6 - 6S_7 \big)^3  \\
& \qquad\qquad =  \big( 252S_3 - 175S_4 + 552S_5 - 245S_6 + 48S_7 \big)^3 .
\end{split}
\end{equation}
Likewise, replacing each of the power sums in \eqref{relation3} by its respective polynomial in Table \ref{tb:1} yields (after multiplying by an overall factor of 12) the algebraic identity
\begin{equation}\label{relation4}
\begin{split}
& \big( 28u^2 +270u^3 +820u^4 +1038u^5 +502u^6 -12u^7 -54u^8 \big)^3 \\
&  \qquad + \big( 224u^2 + 1134u^3 + 1943u^4 +1122u^5 -88u^6 -96u^7 +81u^8 \big)^3  \\
&  \qquad + \big( 168u^2 + 906u^3 + 1665u^4 +1062u^5 -96u^6 -240u^7 -9u^8 \big)^3  \\
&  \qquad\qquad  = \big( 252u^2 + 1302u^3 + 2298u^4 +1422u^5 -30u^6 -132u^7 +72u^8 \big)^3 ,
\end{split}
\end{equation}
which has been written in terms of the generic (complex or real) variable $u$. Moreover, it is to be noted that each of the four summands in Equation \eqref{relation4} can be factorized as $u^2 (u+1)^2$ times a polynomial in $u$ of degree 4, so that the identity in \eqref{relation4} can be neatly simplified to
\begin{equation*}
\begin{split}
& \big( 28 + 214 u + 364 u^2 + 96 u^3 - 54 u^4 \big)^3  + \big( 224 + 686 u + 347 u^2 - 258 u^3 + 81 u^4 \big)^3    \\
& + \big( 168 + 570 u + 357 u^2 - 222 u^3 - 9 u^4 \big)^3 = \big( 252 + 798 u + 450 u^2 - 276 u^3 + 72 u^4 \big)^3.
\end{split}
\end{equation*}
In particular, for $u=0$, we obtain $28^3 +224^3 + 168^3 = 252^3$ or, after dividing each term by 28, $1^3 + 8^3 + 6^3 = 9^3$.

Trivially, for $u=0$, all four summands in either of relations \eqref{relation2} or \eqref{relation4} vanish. Less obvious is the fact that the same happens for $u =-1$. To see why, we need to extend the domain of definition of $S_k = 1^k +2^k + \cdots+ n^k$ to negative values of $n$. As explained in \cite{hersh}, this can be achieved simply by subtracting successively the $k$-th power of $0$, $-1$, $-2$, etc. In this way, it is not difficult to show (see Table 1 of \cite{hersh}) that, for all $k \geq 1$, the polynomial $S_k$ is symmetric about the point $-\frac{1}{2}$ (see also \cite[Theorem 10]{newsome} for a rigorous proof of this assertion). Thus, as $S_k = 0$ for $n=0$, this means that $S_k$ equally vanishes for $n =-1$.\footnote{
It is left as an exercise to the reader to show the following basic recurrence formula for the Bernoulli numbers
\begin{equation*}
B_k = -\frac{1}{k+1} \sum_{j=0}^{k-1} \binom{k+1}{j} B_j,    \quad \text{for all}\,\, k \geq 1,
\end{equation*}
employing Bernoulli's formula \eqref{bernoulli}, and using that $S_k(-1)=0$ for all $k\geq 1$.} (It is readily verified that the polynomials in Table \ref{tb:1} indeed satisfy $S_j(-1)= 0$ for each $j=2,3,\ldots,7$.\footnote{
That $S_k(-1)=0$ also follows directly from the well-known fact that $S_1 = \frac{1}{2} n(n+1)$ is a factor of $S_k$ for all $k \geq1$.}) As a consequence, the quadratic forms $Q_i(k,m,n)$ and $F_i(k,n)$ (defined in Equation \eqref{Qi2} and \eqref{Fi}, respectively) are zero for $n=-1$, regardless of the values that $\alpha_i$, $\beta_i$, $\gamma_i$, $k$, and $m$ may take (provided that $k,m \geq 1$).

Again, we encourage the students to construct their own algebraic identities like those in Equations \eqref{relation2} and \eqref{relation4} by making use of the quadratic forms \eqref{Qi2} and \eqref{Fi}, and Bernoulli's formula \eqref{bernoulli}.

\section{Concluding remarks}\label{sec:4}

In what follows, we briefly consider the Diophantine quadratic equation
\begin{equation}\label{quadratic}
a^2 + b^2 + c^2 = d^2 .
\end{equation}
Quadruples of positive integers $(a,b,c,d)$ such as $(2,3,6,7)$ satisfying \eqref{quadratic} are called Pythagorean {\it quadruples}, in analogy with the Pythagorean {\it triples} $(a,b,c)$ satisfying $a^2 + b^2 = c^2$. A full account of Equation \eqref{quadratic}, including its most general solution, can be found in, for instance, \cite{oliverio,spira}. A partial, quadratic-form solution to \eqref{quadratic} was given by Titus Piezas III in \cite{piezas}
\begin{equation}\label{piezas}
\big( a u^2 -2d uv + av^2 \big)^2 + \big( bu^2 - bv^2 \big)^2 + \big( cu^2 - cv^2 \big)^2 = \big( du^2 -2auv +
dv^2 \big)^2,
\end{equation}
where $(a,b,c,d)$ is a Pythagorean quadruple and $u$ and $v$ are integer variables.\footnote{
It is to be noted that, for the specific case in which $b^2 + c^2 $ happens to be a perfect square, say $e^2$, Equation \eqref{piezas} becomes
\begin{equation*}
\big( a u^2 -2d uv + av^2 \big)^2 + \big( e u^2 - e v^2 \big)^2  = \big( du^2 -2auv + dv^2 \big)^2,
\end{equation*}
which constitutes a two-parameter solution to the Pythagorean equation $r^2 + s^2 = t^2$. For example, for $(a,b,c,d) =
(8,9,12,17)$, where $9^2 + 12^2 = 15^2$, we have
\begin{equation*}
\big( 8 u^2 - 34 uv + 8v^2 \big)^2 + \big( 15 u^2 - 15 v^2 \big)^2  = \big( 17u^2 -16 uv + 17v^2 \big)^2 .
\end{equation*}}

On the other hand, setting $u = S_k$ in the algebraic identity\footnote{
Clearly, Equation \eqref{quadratic2} is of the form $q_1^2 + q_2^2 + q_3^2 = q_4^2$, with each $q_i$ being a quadratic form $q_i = \alpha_i u^2 + \beta_i uv + \gamma_i v^2$. For example,
taking $v =1$, $\alpha_1 = \gamma_1 =0$, and $\beta_1 =1$, we have $q_1 =u$.}
\begin{equation}\label{quadratic2}
u^2 + (1 +u )^2 + ( u + u^2)^2 = ( 1 + u + u^2 )^2,
\end{equation}
and utilizing the formula \eqref{square}, we obtain the following solution to Equation \eqref{quadratic} in terms of sums of powers of integers:
\begin{multline}\label{quadratic3}
\big( S_k \big)^2 + \big( 1+S_k \big)^2 + \Bigg( S_k + \frac{2}{k+1} \sum_{j=0}^{k/2} B_{2j} \binom{k+1}{2j}
S_{2k+1-2j} \Bigg)^2   \\
= \Bigg( 1 + S_k + \frac{2}{k+1} \sum_{j=0}^{k/2} B_{2j} \binom{k+1}{2j} S_{2k+1-2j} \Bigg)^2.
\end{multline}
For example, for $k=2$, from Equation \eqref{quadratic3} we find (after multiplying by an overall factor of $3$)
\begin{equation*}
\big(3S_2 \big)^2 +  \big(3+3S_2 \big)^2 + \big(3S_2 +S_3 +2S_5 \big)^2 = \big(3+3S_2 +S_3 +2S_5 \big)^2 .
\end{equation*}
Now, replacing $S_2$, $S_3$, and $S_5$ by its respective polynomial in Table \ref{tb:1}, and multiplying by an overall factor of $12$, we arrive at the following identity
\begin{equation*}
a^2 + ( a +18)^2 + b^2 = (b + 18)^2,
\end{equation*}
where
\begin{equation*}
a = 3u +9u^2 +6u^3,  \quad\text{and}\quad  b =  3u +\frac{19}{2}u^2 +9u^3 +\frac{13}{2}u^4 +6u^5 +2u^6,
\end{equation*}
with $u$ taking any real or complex value. (It is easily seen that $b$ is integer whenever $u$ so is.)

Let us finally mention that, by replacing $u$ with $S_k^2$ and $v$ with $S_m^2$ in the basic identity $(u -v)^2 + (2\sqrt{uv})^2
= (u+v)^2$, one can generate infinite Pythagorean triangles through the relation
\begin{equation}\label{triple}
\big( S_k^2 - S_m^2 \big)^2 + \big( 2 S_k S_m \big)^2 = \big( S_k^2 + S_m^2 \big)^2 .
\end{equation}
Using Equations \eqref{product} and \eqref{square}, the side lengths of the triangle can furthermore be written as a linear combination of power sums. For example, for $k=1$ and $m=3$, from Equation \eqref{triple} we get (after multiplying by a global factor of $2$) the relation
\begin{equation*}
\big( S_5 + S_7 - 2S_3  \big)^2 + \big( S_3 +3S_5 \big)^2 = \big( 2S_3 + S_5 + S_7 \big)^2.
\end{equation*}
This is to be compared with the following relation
\begin{equation*}
\big( 2S_5  + 2S_7 -S_3 \big)^2 + \big( S_3 +3S_5 \big)^2 = \big( S_3 + 2S_5 + 2S_7 \big)^2,
\end{equation*}
which was derived by Piza \cite{piza} using the algebraic identity $( y^4 - y^2)^2/4 + (2y^3 )^2/4 = (y^4 +y^2)^2/4$ and then
taking $y =2S_1$. Now, from the last two relations, we readily obtain
\begin{equation*}
\big( S_5 + S_7 -2S_3  \big)^2 + \big( S_3 +2S_5 +2S_7 \big)^2 = \big( 2S_3 +S_5 +S_7 \big)^2
+ \big( 2S_5 +2S_7 -S_3 \big)^2.
\end{equation*}
Taking into account that $S_5 + S_7 = 2S_3^2$, this relation can be simplified to (after dividing by the common factor $S_3$):
\begin{equation}\label{abcd}
\big( 2u -2 \big)^2 + \big( 4u +1 \big)^2 = \big( 2u +2 \big)^2 + \big( 4u -1 \big)^2,
\end{equation}
with $u = S_3$. The identity in Equation \eqref{abcd}, which actually holds for arbitrary values of $u$, gives us a family of solutions to the Diophantine equation $a^2 + b^2 = c^2 + d^2$. For example, for $u =17$, from Equation \eqref{abcd} we find that $32^2 + 69^2 = 36^2 + 67^2$.

\section*{Acknowledgment}

The author thanks the referee for carefully going through the manuscript and for valuable comments.

\printbibliography

\end{document}